\documentclass[journal,twoside,web]{ieeecolor}

\usepackage{generic}
\usepackage{bm}
\let\labelindent\relax
\usepackage{enumitem}
\usepackage{cite}
\usepackage{amsmath}
\usepackage{amssymb,amsfonts}

\usepackage{amsthm}
\usepackage[ruled,linesnumbered]{algorithm2e}
\usepackage{textcomp}
\usepackage{graphicx,color}
\usepackage{mathrsfs}
\usepackage{url}
\usepackage{color}
\usepackage{dsfont}
\usepackage{bbm}
\usepackage{booktabs}
\usepackage{array}
\usepackage[table]{xcolor}
\usepackage{tikz}
\usetikzlibrary{decorations.pathmorphing}
\tikzset{snake it/.style={decorate, decoration=snake}}

\usepackage{tcolorbox}
\usepackage{mathtools}
\usepackage{multicol}
\usepackage[colorlinks=true, linkcolor=blue, citecolor=blue]{hyperref}
\usepackage{accents}
\usepackage{epsfig}
\usepackage{lcsys}

\newtheorem{theorem}{Theorem}

\newtheorem{definition}{Definition}
\newtheorem{corollary}[theorem]{Corollary}

\newtheorem{problem}{Problem}
\newtheorem{remark}{Remark}
\newtheorem{assumption}{Assumption}
\newtheorem{example}{Example}

\newcommand{\real}{\mathbb{R}}

\renewcommand{\theenumi}{(\roman{enumi})}

\newcommand*{\QEDA}{\hfill\ensuremath{\blacksquare}}

\newcommand\oprocendsymbol{\hbox{$\square$}}
\newcommand\oprocend{\relax\ifmmode\else\unskip\hfill\fi\oprocendsymbol}

\title{\LARGE \textbf{Constricting Tubes for Prescribed-Time Safe Control}}

\author{Darshan Gadginmath \qquad Ahmed Allibhoy \qquad Fabio Pasqualetti 
\thanks{This work was supported in part by awards ARO-W911NF-24-1-0228 and NSF-CMMI-2308639. D.  Gadginmath is with Mitsubishi Electric Research Labs, Cambridge, MA, USA, 02139. A. Allibhoy and F. Pasqualetti are with the Department of Electrical Engineering and Computer Science at the University of California at Irvine, Irvine, CA, 92697, USA.
    E-mail: \href{mailto:gadginmath@merl.com}{\texttt{gadginmath@merl.com}},
    \href{mailto:aallibho@uci.edu}{\texttt{aallibho@uci.edu}},
    \href{mailto:fabiopas@uci.edu}{\texttt{fabiopas@uci.edu.}} }
}

\begin{document}

\maketitle
\thispagestyle{empty}

\begin{abstract}
We propose a constricting Control Barrier Function (CBF) framework
for prescribed-time control of control-affine systems with input
constraints. Given a system starting outside a target safe set, we
construct a time-varying safety tube that shrinks from a relaxed set
containing the initial condition to the target set at a user-specified
deadline. Any controller rendering this tube forward invariant
guarantees prescribed-time recovery by construction. The constriction
schedule is bounded and tunable by design, in contrast to
prescribed-time methods where control effort diverges near the
deadline. Feasibility under input constraints reduces to a single
verifiable condition on the constriction rate, yielding a closed-form
minimum recovery time as a function of control authority and initial
violation. The framework imposes a single affine constraint per
timestep regardless of state dimension, scaling to settings where
grid-based reachability methods are intractable. We validate on an 
18-dimensional multi-agent system, demonstrating scalability and prescribed-time recovery with bounded control effort.
\end{abstract}


\begin{IEEEkeywords}
prescribed-time control, safe control, nonlinear control, constricting tubes
\end{IEEEkeywords}


\section{Introduction}

\IEEEPARstart{D}RIVING a dynamical system into a desired set within a prescribed time
is a recurring requirement where both timeliness and safety are
critical. Applications such as satellite rendezvous under a mission deadline, 
or steering a robot into a goal region before a planning horizon expires require explicit prescribed-time control. The
challenge is hardest when the system is nonlinear and subject to input
constraints, a combination not well addressed by existing tools.
Hamilton-Jacobi reachability~\cite{SB-MC-SH-CJT:2017,IMM-AMB-CJT:2005}
provides formal guarantees but requires solving a PDE over the state
space, scaling exponentially in dimension and precluding real-time
synthesis. Control Barrier Functions
(CBFs)~\cite{ADA-SC-etal:2019} enable real-time synthesis via
pointwise quadratic programs (QPs) and enforce forward invariance of a
safe set, but are designed for systems that already satisfy the safety
constraint. When the initial condition lies outside the target set,
standard CBF conditions only achieve recovery asymptotically.
Prescribed-time and fixed-time methods attempt to address this gap, but
share a common limitation. The convergence mechanism, whether a
singular time-varying gain~\cite{AP:2011}, a Lyapunov decay
condition~\cite{KG-EA-DP:2020_convergenceCLF}, or a slack
variable~\cite{KG-DP:2021}, is implicit in the analysis rather than an
explicit design parameter. As a result, it is difficult to bound
control effort a priori or verify feasibility under input constraints
at design time.

In this paper, we propose a \emph{constricting tube} framework that
reduces the prescribed-time recovery problem to a forward
invariance problem. Given an initial condition outside the target set,
we construct a time-varying tube that contains the initial state and
collapses exactly to the target set at the deadline. The tube is
parameterized by a designer-specified constriction schedule, bounded
and tunable by construction, so that any controller rendering the tube
forward invariant guarantees prescribed-time recovery. The constriction
rate directly characterizes the worst-case control demand and admits a
closed-form characterization of the minimum deadline without requiring
a Lyapunov analysis or offline computation. This yields a highly scalable framework for prescribed-time control synthesis with convergence certificates. 

\noindent\textbf{Related work.} The problem of stabilizing a target set has been studied from
various angles. We review the directions most relevant to constrained 
prescribed-time stabilization.

\emph{Hamilton-Jacobi (HJ) reachability.}
HJ reachability~\cite{SB-MC-SH-CJT:2017} provides formal safety and
reachability guarantees via Hamilton-Jacobi-Isaacs PDEs solved backward
in time. The backward reachable set (BRS)~\cite{JFF-MC-CJT-SSS:2015}
is the set of initial conditions from which $\mathcal{C}$ can be reached
at a prescribed terminal time $T$ under optimal control, making this
formulation most directly analogous to ours. However, solving the backward PDE requires discretizing the state 
space over a grid, and complexity scales as $\mathcal{O}(M^n)$ with state 
dimension $n$ and grid size $M$, limiting practical use to low-dimensional systems. The 
proposed framework, by contrast, imposes a single affine constraint 
per timestep and scales as $\mathcal{O}(n)$, making it applicable to 
high-dimensional settings. Neural approximations such
as DeepReach~\cite{SB-CJT:2021} extend HJ reachability to higher dimensions but
require offline training and provide only probabilistic guarantees.
The Control Barrier Value Function~\cite{JJC-etal:2021} recovers the
maximal controlled-invariant set while admitting a QP-based controller,
but does not provide an explicit mechanism for prescribed-time recovery
under input constraints. In contrast, the proposed framework imposes an affine constraint per timestep regardless of state dimension,
admits an analytic feasibility certificate, and is demonstrated on an
$18$-dimensional system, and higher in~\cite{DG-AA-FP:2026}, where grid-based methods are computationally intractable.

\emph{Prescribed-time stabilization via scaling functions.}
A prominent line of work achieves prescribed-time stabilization through
a time-varying scaling function that diverges unboundedly near
the deadline~\cite{YS-YW-JH-MK:2017, WL-MK:2023}. Fixed-time CLF
methods~\cite{AP:2011} and their CBF
extensions~\cite{KG-DP:2021} enforce convergence through Polyakov-type
decay conditions. The CLF-based prescribed-time framework
of~\cite{KG-EA-DP:2020_convergenceCLF} characterizes a feasibility
region through global quadratic bounds on the CLF, but these constants
are conservative and may not exist globally. Across all of these
approaches, the control gain grows without bound as time reaches the
deadline, and input constraints are handled through slack variables that
may sacrifice the timing guarantee~\cite[Remark~5]{KG-DP:2021}.

\emph{Prescribed-time and fixed-time CBFs.}
Several recent works modify the CBF condition with time-varying gains
to enforce temporal guarantees.~\cite{KG-etal:2021_multirate} combines
fixed-time CBFs with MPC-based planning under input constraints, but
does not explicitly characterize the minimum feasible
deadline.~\cite{JSK-SK:2025} achieves fixed-time convergence for
higher-order systems using a polynomial-in-time barrier, but does not
account for input bounds and control effort grows near the
deadline.~\cite{TYH-etal:2024} combines backstepping with a
time-varying blow-up function for unknown dynamics, but relies on a
class-$\mathcal{K}$ gain that diverges with time. A common limitation
across these methods is that the convergence mechanism is implicit in
the analysis, making it difficult to bound control effort a priori or
verify feasibility under input constraints.

Our framework makes the prescribed-time convergence mechanism explicit
through a designer-specified constricting tube. Under input 
constraints, we derive feasibility constraints for convergence under worst-case control demand. The framework also 
generalizes to stochastic systems, such as generative sampling 
models~\cite{DG-AA-FP:2026}.

\noindent\textbf{Contributions.} The main contributions of this paper are:
\begin{enumerate}
    \item A constricting CBF framework that guarantees prescribed-time
    recovery via forward invariance of a designer-specified
    safety tube, for any controller satisfying an affine constraint 
    independent of state dimension.
    \item A feasibility analysis under input constraints, yielding a
    verifiable design-time condition on the constriction rate, and a lower bound on the minimum recovery time.
\end{enumerate}
We further verify our feasibility analysis on an adaptive cruise control example, and validate scalability of our framework on an 18-dimensional 
coupled multi-agent system with 15 pairwise collision-avoidance  constraints.

\section{Constricting CBF framework}\label{sec:methodology}
\subsection{Problem formulation}
We consider control-affine systems
\begin{equation}
    \dot{x} = f(x) + g(x)u,
    \label{eqn:system}
\end{equation}
with state $x \in \mathbb{R}^n$, control $u \in \mathcal{U}$,
where $\mathcal{U} \subset \real^m$ is a set of admissible inputs, and locally Lipschitz vector fields $f :
\mathbb{R}^n \to \mathbb{R}^n$ and $g : \mathbb{R}^n \to \mathbb{R}^{n
\times m}$. Let $h : \mathbb{R}^n \to \mathbb{R}$ be a continuously
differentiable function characterizing a target set $\mathcal{C} = \{x :
h(x) \geq 0\}$. Given a deadline $T > 0$ and an initial condition outside the target set $x(0) \notin \mathcal{C}$, we seek a feedback control policy that solves the
following problem.

\begin{problem}[\bf Prescribed-time control]
\label{prob:main}
Given system~\eqref{eqn:system} with $x(0) \notin \mathcal{C}$,
synthesize a feedback policy $u = \kappa(x, t)$ such that:
\begin{enumerate}[label=(\roman*)]
    \item \emph{Prescribed-time recovery:} $x(T) \in \mathcal{C}$.
    \item \emph{Forward invariance:} $x(t) \in \mathcal{C}$ for all $t \geq T$.
\end{enumerate}
\end{problem}

We make the following assumptions throughout the paper.
\begin{assumption}[\textbf{Control Barrier Function and Regularity}]
\label{assn:standing}
\quad
\begin{enumerate}[label=(\roman*)]
    \item The function $h$ is a CBF for $\mathcal{C}$: there exists an
    extended class-$\mathcal{K}$ function $\gamma$ such that for all $x \in \mathbb{R}^n$
    \begin{equation}
        \sup_{u \in \mathbb{R}^m} [L_f h(x) + L_g h(x)\,u] \geq -\gamma(h(x)),
        \label{eqn:cbf}
    \end{equation}
    \item $L_g h(x) \neq 0$ for all $x \in \partial \mathcal{C}$.
\end{enumerate}
\end{assumption}

Assumption~\ref{assn:standing}(i) guarantees that $\mathcal{C}$ is
controlled invariant, i.e., there always
exists a control input keeping the system inside $\mathcal{C}$ once
it has entered. Assumption~\ref{assn:standing}(ii) is a technical 
assumption required to ensure feasibility of the optimization-based 
feedback controller we introduce in the sequel. 

We solve Problem 1 by defining a 
time-dependent relaxation of the target set that constricts over time, so
that any trajectory remaining inside the shrinking tube is driven
into $\mathcal{C}$ by the deadline. With this formulation, 
the problem reduces to maintaining 
forward invariance of the shrinking tube, as we describe next.

\subsection{Constricting tubes for prescribed-time control}
\label{sec:framework}
We begin by defining a time-varying tube that uses a designer-specified relaxation of set~$\mathcal{C}$ with the desired convergence timeline. 
\begin{definition}[\textbf{Constricting tube and relaxation schedule}]
\label{def:constricting_tube}
Given a CBF $h$ for $\mathcal{C}$ and an initial condition $x(0)$,
define the \emph{initial relaxation}
\begin{equation}
    r_0 \coloneqq \max\bigl\{0,\, -h(x(0))\bigr\},
    \label{eqn:r0}
\end{equation}
and the \emph{tube function}
$\tilde{h} : \mathbb{R}^n \times [0,T] \to \mathbb{R}$ as
\begin{equation}
    \tilde{h}(x, t) = h(x) + r(x(0), t),
    \label{eqn:constricting_barrier}
\end{equation}
where the \emph{relaxation schedule}
$r : \mathbb{R}^n \times [0,T] \to \mathbb{R}_{\geq 0}$ is
$\mathcal{C}^1$ in~$t$ and satisfies:
\begin{enumerate}[label=(\roman*)]
    \item \emph{Initial containment:} $r(x(0), 0) = r_0$,
    \item \emph{Target set recovery:} $r(x(0), T) = 0$,
    \item \emph{Monotone constriction:} $\dot{r}(x(0), t) \leq 0$
          for all $t \in [0,T]$.
\end{enumerate}
The associated \emph{constricting tube} is the superlevel set
\begin{equation}
    \tilde{\mathcal{C}}(t) = \{x \in \mathbb{R}^n : \tilde{h}(x,t) \geq 0\},
    \label{eqn:tube}
\end{equation}
satisfying $x(0) \in \tilde{\mathcal{C}}(0)$ and
$\tilde{\mathcal{C}}(T) = \mathcal{C}$.
\end{definition}

The schedule $r(x(0), t)$ is parameterized by the initial condition,
and the entire schedule is determined once~$x(0)$ is fixed. The
definition unifies two cases. When $x(0) \notin \mathcal{C}$, we have
$r_0 = -h(x(0)) > 0$, and the relaxation inflates the target set so that
$x(0) \in \tilde{\mathcal{C}}(0)$. When $x(0) \in \mathcal{C}$, we have
$r_0 = 0$, so $r(x(0),t) = 0$ for all $t$ and the tube reduces to
$\tilde{\mathcal{C}}(t) = \mathcal{C}$, recovering standard CBF-based
forward invariance as a special case. The relaxation schedule, $r(x(0),t)$, is a design choice independent
of the system dynamics. Possible choices include:

\begin{itemize}
    \item Linear schedule: $r(x(0),t) = r_0(1-t/T)$
    \item Polynomial schedule: $r(x(0),t) = r_0(1-t/T)^p$, $p > 0$
    \item Exponential schedule: $r(x(0),t) = r_0\bigl(e^{\lambda(1-t/T)} - 1\bigr)/(e^\lambda - 1)$, $\lambda > 0$
\end{itemize}


\begin{figure}
    \centering
    \includegraphics[width=0.8\linewidth]{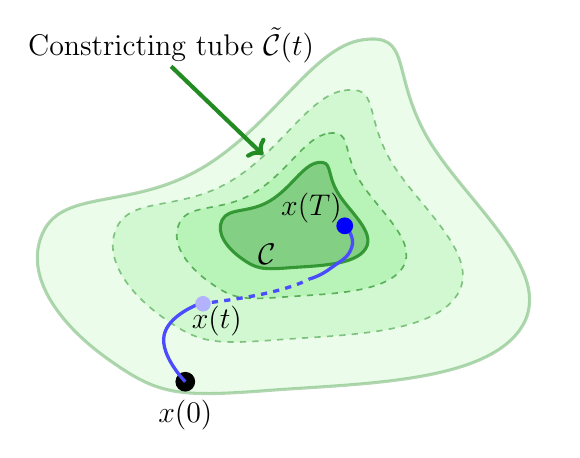}
    \vspace{-0.0ex}
    \caption{Geometry of the constricting tube framework. The initial
    condition $x(0)$ lies outside the target set
    $\mathcal{C} = \{x : h(x) \geq 0\}$. The schedule $r(x(0),t)$
    inflates $\mathcal{C}$ into a tube $\tilde{\mathcal{C}}(t) = \{x :
    h(x) + r(x(0),t) \geq 0\}$ containing $x(0)$ at $t=0$. As $r(x(0),t)
    \to 0$, the tube boundary constricts through the level sets of $h$
    until $\tilde{\mathcal{C}}(T) = \mathcal{C}$. Any controller
    rendering $\tilde{\mathcal{C}}(t)$ forward invariant guarantees
    $x(T) \in \mathcal{C}$.}
    \label{fig:constricting_cbf}
    \vspace{-0.0ex}
\end{figure}

Solving Problem~\ref{prob:main} requires enforcing forward invariance
of~$\tilde{\mathcal{C}}(t)$. If the trajectory never leaves
$\tilde{\mathcal{C}}(t)$, then~$x(T) \in \tilde{\mathcal{C}}(T) = \mathcal{C}$,
achieving the control objective. The following definition, paralleling
the standard CBF condition~\eqref{eqn:cbf}, identifies when such forward
invariance is achievable.

\begin{definition}[\textbf{Constricting CBF}]
\label{def:constricting_cbf}
The tube function $\tilde{h}$ in~\eqref{eqn:constricting_barrier} is a
\emph{constricting CBF} for $\mathcal{C}$ on $[0,T]$, 
with relaxation schedule $r$ and initial condition $x(0)$,  if there exists an
extended class-$\mathcal{K}$ function $\gamma$ such that for all
$(x,t) \in \tilde{\mathcal{C}}(t) \times [0,T]$,
\begin{equation}
    \sup_{u \in \mathcal{U}}\,\bigl\{\, L_f h(x) + L_g h(x)\, u + \dot{r}(x(0),t)\,\bigr\}
    \geq -\gamma\bigl(\tilde{h}(x,t)\bigr).
    \label{eqn:cbf_condition}
\end{equation}
\end{definition}

Definition~\ref{def:constricting_cbf} extends the standard CBF
condition~\eqref{eqn:cbf} by the inward-pressure $\dot{r}(x(0),t) \leq 0$, of the constriction tube. When $r \equiv 0$ (i.e., $x(0) \in \mathcal{C}$), we recover the
standard CBF condition.



\begin{theorem}[\textbf{Prescribed-time recovery via tube invariance}]
\label{thm:forward_invariance}
Let $\tilde{h}$ be a constricting CBF for $\mathcal{C}$ on $[0,T]$ in the
sense of Definition~\ref{def:constricting_cbf}. Then any locally Lipschitz
feedback controller $u = \kappa(x, t)$ satisfying
\begin{equation}
    L_f h(x) + L_g h(x)\,\kappa(x,t) + \dot{r}(x(0),t)
    \geq -\gamma\bigl(\tilde{h}(x,t)\bigr)
    \label{eqn:cbf_pointwise}
\end{equation}
for all $(x,t) \in \tilde{\mathcal{C}}(t) \times [0,T]$ renders
$\tilde{\mathcal{C}}(t)$ forward invariant on $[0,T]$ and ensures:
\begin{enumerate}[label=(\roman*)]
    \item \emph{Prescribed-time recovery:} $x(T) \in \mathcal{C}$.
    \item \emph{Forward invariance:} $x(t) \in \mathcal{C}$ for all $t \geq T$.
\end{enumerate}
\end{theorem}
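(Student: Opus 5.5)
The plan is to treat $\tilde h(x(t),t)$ as a scalar function along the closed-loop trajectory and apply a comparison-lemma argument, exactly as in the standard CBF forward-invariance proof, but with time as an extra argument. Fix $x(0)$, so the schedule $r(x(0),\cdot)$ is a fixed $\mathcal{C}^1$ function of time. Since $f,g$ are locally Lipschitz and $\kappa$ is locally Lipschitz in $x$ (and, I will assume, piecewise continuous in $t$), the closed loop has a unique solution on some maximal interval $[0,t_{\max})$. Define $\phi(t) \coloneqq \tilde h(x(t),t) = h(x(t)) + r(x(0),t)$. Then $\phi$ is $\mathcal{C}^1$, with
\begin{equation*}
\dot\phi(t) = L_f h(x(t)) + L_g h(x(t))\,\kappa(x(t),t) + \dot r(x(0),t),
\end{equation*}
and by construction $\phi(0) = h(x(0)) + r_0 \geq 0$, using Definition~\ref{def:constricting_tube}(i).

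The key step is to show $\phi(t)\geq 0$ on $[0,T]$. Condition~\eqref{eqn:cbf_pointwise} is only assumed on the tube $\tilde{\mathcal{C}}(t)$, so I cannot apply the comparison lemma globally at once. Instead, I argue by contradiction. Suppose $\phi(t_1)<0$ for some $t_1\le T$, and let $t_0 = \sup\{t<t_1:\phi(t)\ge 0\}$. Then $\phi(t_0)=0$ and $\phi<0$ on $(t_0,t_1]$. This is the main obstacle: on $(t_0,t_1]$ the state is outside the tube, so \eqref{eqn:cbf_pointwise} gives no information there.

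I would try two fixes. The first is to read \eqref{eqn:cbf_pointwise} as holding on an open neighborhood of the tube, using continuity of both sides. Then near $t_0$ we have $\dot\phi\ge-\gamma(\phi)$, and the comparison lemma with $\dot y=-\gamma(y)$, $y(t_0)=0$ (whose unique solution is $y\equiv 0$, assuming $\gamma$ is locally Lipschitz) gives $\phi\ge 0$ just after $t_0$, a contradiction. The second fix, which avoids extending the condition, is a Nagumo-type argument. On the boundary $\phi=0$ we have $\dot\phi\ge -\gamma(0)=0$, which is the subtangentiality condition for the time-augmented set $\{(x,t):\tilde h(x,t)\ge 0\}$ of the augmented system $(\dot x,\dot t)=(f+g\kappa,1)$. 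This gives invariance via the Nagumo/Blanchini theorem, provided the regularity condition $\nabla_{(x,t)}\tilde h\ne 0$ holds on the boundary. When $\tilde h(x,t)=0$ and $\dot r(x(0),t)=0$ one needs $\nabla h(x)\ne 0$; I would invoke Assumption~\ref{assn:standing}(ii), which gives $L_g h\neq 0$ on $\partial\mathcal{C}$, and note that if this is needed on intermediate level sets it should be stated as an assumption. I will present the first fix, since it matches how the paper phrases CBF results. Boundedness of $\tilde{\mathcal{C}}$ is not required: finite escape before $T$ would have to be excluded separately, and I will assume solutions exist on $[0,T]$.

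Given $\phi\ge 0$ on $[0,T]$, part (i) follows immediately: $h(x(T)) = \phi(T) - r(x(0),T) = \phi(T)\ge 0$, by Definition~\ref{def:constricting_tube}(ii). For part (ii), on $t\ge T$ I extend the schedule by $r\equiv 0$. Monotonicity and $r(x(0),T)=0$ make this extension consistent, and $\dot r$ can be taken to be $0$, so the requirement reduces to the standard CBF condition~\eqref{eqn:cbf}. The controller is extended to $t>T$ by any locally Lipschitz selection satisfying $L_fh+L_gh\,u\ge-\gamma(h)$; Assumption~\ref{assn:standing}(i)--(ii) guarantees such a selection, for example a QP-based one. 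The same comparison argument, now starting from $h(x(T))\ge 0$, then yields $h(x(t))\ge 0$ for all $t\ge T$, which is part (ii).
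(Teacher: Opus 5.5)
The route you choose to present, your first fix, has a genuine gap. The step ``read \eqref{eqn:cbf_pointwise} as holding on an open neighborhood of the tube, using continuity of both sides'' is not valid. Continuity carries a \emph{strict} inequality to a neighborhood, but a non-strict inequality that holds on a closed set says nothing about points just outside it.

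For example, take $\dot x=u$, $h(x)=x$, $\gamma(s)=s$, the linear schedule, and $\kappa(x,t)=-\dot r(x(0),t)-|x+r(x(0),t)|$. This gives $\dot{\tilde h}=-|\tilde h|$, so \eqref{eqn:cbf_pointwise} holds with equality on the tube. Yet $\dot{\tilde h}=\tilde h<-\gamma(\tilde h)$ at every point with $\tilde h<0$, so no neighborhood works.

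More fundamentally, the first fix never uses any regularity of the tube boundary, and without regularity the conclusion is false. For $\dot x=1$ and $\tilde h(x)=-x^2$, the condition holds on $\{\tilde h\ge 0\}=\{0\}$, since $\dot{\tilde h}(0)=0=-\gamma(0)$, yet $\{0\}$ is not invariant. Any argument that gets invariance from the tube condition alone, without using $\nabla_{(x,t)}\tilde h\neq 0$ on the boundary, must therefore be broken. Assuming the condition on an open set containing the tube would rescue your comparison argument, but that is a stronger hypothesis than the theorem's.

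Your second fix is exactly the paper's proof. Contrary to your expectation, this Nagumo route, not the comparison lemma, is the one the paper uses. The paper augments $z=(x,t)$ and notes that $\nabla_{(x,t)}\tilde h=(\nabla h,\dot r)\neq 0$ on the boundary of $\{(x,t):\tilde h(x,t)\ge 0\}$. It then observes that \eqref{eqn:cbf_pointwise} reduces there to $\dot{\tilde h}\ge 0$ and applies Nagumo's theorem. Present that one.

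If you want to stay elementary, use the standard perturbation proof of Nagumo in this setting:
\begin{enumerate}
\item Replace the closed loop by $\dot x=f+g\kappa+\varepsilon\nabla h$ and set $\phi_\varepsilon(t)=\tilde h(x_\varepsilon(t),t)$.
\item At a first boundary contact, $\dot\phi_\varepsilon\ge\varepsilon\|\nabla h\|^2>0$, which forbids exit.
\item Let $\varepsilon\to 0$, using uniqueness of the unperturbed solution and closedness of the tube.
\end{enumerate}
Here $\nabla h\neq 0$ at boundary points is automatic when $\dot r<0$, since the condition forces $L_fh+L_gh\,\kappa\ge -\dot r>0$. Otherwise it is precisely the regularity you flag.

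Your remark that this regularity is needed on intermediate level sets when $\dot r=0$ but $r>0$ is correct. It applies equally to the paper's proof, since Assumption~\ref{assn:standing}(ii) only covers $\partial\mathcal C$. It is harmless for the linear, exponential and polynomial ($p\ge1$) schedules, where $\dot r<0$ on $[0,T)$.

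Your handling of $t\ge T$ is fine and more explicit than the paper's. There a selection satisfying the standard CBF condition on all of $\mathbb R^n$ exists by Assumption~\ref{assn:standing}(i), so the comparison argument is legitimate in that phase.
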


\noindent\textbf{Proof.}
By Definition~\ref{def:constricting_tube}(i), $r(x(0),0) = r_0
\geq -h(x(0))$, so $\tilde{h}(x(0),0) \geq 0$ and
$x(0) \in \tilde{\mathcal{C}}(0)$. Augment the state as $z = (x,t)$
with dynamics $\dot{z} = [f(x)+g(x)\kappa(x,t),\,1]^\top$ and define the
space-time set $\tilde{\mathcal{S}} = \{(x,t) : \tilde{h}(x,t) \geq 0\}$.
On $\partial\tilde{\mathcal{S}}$, where $\tilde{h}(x,t) = 0$, the
gradient is $\nabla_{(x,t)} \tilde{h} = (\nabla h(x),\,\dot{r}(x(0),t))$. This is nonzero on $\partial\tilde{\mathcal{S}}$ since either
$\dot{r}(x(0),t) < 0$ under strict constriction, or
$\nabla h(x) \neq 0$ on $\partial\mathcal{C}$ (Assumption~\ref{assn:standing}(ii)) holds. Hence $0$ is a regular value of $\tilde{h}$ on
$\partial\tilde{\mathcal{S}}$. Condition~\eqref{eqn:cbf_pointwise} then
reduces on $\partial\tilde{\mathcal{S}}$ to
$L_f h + L_g h\,\kappa + \dot{r}(x(0),t) \geq 0$,
which is the Nagumo tangency condition for $\tilde{\mathcal{S}}$.
By~\cite[Section~4.2.2]{FB-SM:08}, $x(0) \in \tilde{\mathcal{C}}(0)$
implies $\tilde{h}(x(t),t) \geq 0$ for all $t \in [0,T]$, i.e.,
$\tilde{\mathcal{C}}(t)$ is forward invariant on $[0,T]$.
At $t = T$, Definition~\ref{def:constricting_tube}(ii) gives
$r(x(0),T) = 0$, so $h(x(T)) = \tilde{h}(x(T),T) \geq 0$,
proving~(i). For $t \geq T$, condition~\eqref{eqn:cbf_pointwise} with
$r \equiv 0$ reduces to the standard CBF condition~\eqref{eqn:cbf}, proving (ii). \hfill$\blacksquare$

The constriction rate $\dot{r}(x(0),t) < 0$ acts as an inward pressure
beyond what is needed for forward invariance of a static set. This
additional demand drives convergence. The faster the tube constricts,
the harder the controller must push the state inward. Since~$r$ is
$\mathcal{C}^1$ on the compact interval~$[0,T]$, $\dot{r}(x(0),t)$ is
bounded, and the additional demand is finite. A feedback controller
can be synthesized by standard optimization techniques like
in~\cite{ADA-SC-etal:2019}.

\section{Feasibility under input constraints}
\label{sec:feasibility}
We synthesize here a
feedback controller for input constrained systems possessing 
a candidate constricting CBF~$\tilde{h}$. 
We assume
the input constraint is $\|u\| \leq u_{\max}$, but our
analysis generalizes to $u \in \mathcal{U}$ where $\mathcal{U}\subseteq \mathbb{R}^m$ is convex. The proposed controller is 
\begin{align}
\begin{aligned}
    &\kappa(x,t) = \underset{{u \in \mathbb{R}^m}}{\text{argmin}} \quad  J(x,u,t) \\
    \text{s.t.} \quad
    & L_f h(x) + L_g h(x)\,u + \dot{r}(x(0),t) \geq
      -\gamma\bigl(\tilde{h}(x,t)\bigr), \\
    & \|u\| \leq u_{\max}.
\end{aligned}
\label{eqn:qp}
\end{align}
When $J$ is convex and quadratic in $u$,~\eqref{eqn:qp} is a convex
program solvable in real time via standard solvers. 
The program is feasible 
if $\tilde{h}$ is a constricting CBF, and 
we derive here sufficient conditions for feasibility to hold.
For $t \geq T$, the constriction schedule 
satisfies $\dot{r}(x(0), t) = 0$,
and the program reverts to the standard optimization-based safe controller with CBF constraints.

Theorem~\ref{thm:forward_invariance} guarantees prescribed-time recovery
whenever the problem~\eqref{eqn:qp} is feasible.
In the absence of input constraints, 
\eqref{eqn:cbf_condition}
is feasible for all $(x, t) \in \tilde{\mathcal{C}} \times [0, T]$ by Assumption~\ref{assn:standing}(ii). 
However, the presence of input constraints fundamentally changes this
picture. When the control authority available to
satisfy~\eqref{eqn:cbf_condition} is limited, the constriction
demand $|\dot{r}(x(0),t)|$ may exceed what the actuators can deliver
at certain states or times. In this setting, there is a tradeoff 
between the amount of control authority~$u_{\text{max}}$ and the deadline 
$T$: as $u_{\text{max}}$ decreases, one must tolerate a longer deadline in order 
to maintain feasibility. 

To choose a feasible deadline, it is crucial to verify whether $u_{\max}$
is large enough to meet the constriction demand $|\dot{r}(x(0),t)|$ at
every point on the tube. We define the \emph{barrier authority} to capture this idea.
\begin{equation}
    \sigma(x) \coloneqq \sup_{u \in \mathcal{U}} \dot{h}(x,u).
    \label{eq:safety_margin}
\end{equation}
For $\mathcal{U} = \{u : \|u\| \leq u_{\max}\}$, this evaluates to
$\sigma(x) = \|L_g h(x)\|\, u_{\max} + L_f h(x)$. The barrier authority
$\sigma(x)$ is the maximum rate that~$h$ can be increased at $x \in \mathbb{R}^n$.
When $\mathcal{U} = \mathbb{R}^m$, $\sigma(x) = +\infty$
wherever $L_g h(x) \neq 0$, 
indicating there is no limit on control authority in the absence 
of input constraints.


\begin{example}[\textbf{Barrier authority for inverted pendulum}]To illustrate the geometry of the barrier authority, consider the
controlled simple pendulum 
\begin{align*}
    \dot{x}_1 = x_2, \qquad
    \dot{x}_2 = -\sin x_1 + u,
\end{align*}
with CBF $h(x) = c - (x_1 - \pi)^2 - x_2^2$, a ball of radius $\sqrt{c}$ around
the unstable upright equilibrium $(\pi, 0)$.
The barrier authority is
\begin{equation}
  \sigma(x) = 2|x_2|\,u_{\max} + 2x_2\bigl(\sin x_1 - (x_1 - \pi)\bigr).
\end{equation}
Figure~\ref{fig:barrier_authority} shows $\sigma(x)$ over the domain
$[\pi - 2.5,\, \pi + 2.5] \times [-2.5,\, 2.5]$ for $c = 0.01$ and
$u_{\max} = 1.5$. The sign of $\sigma$ depends only on $x_1$: for each half-plane, $\sigma$ factors as
$2x_2 \cdot \varphi(x_1)$ where $\varphi(x_1) = u_{\max} + \sin x_1 - (x_1 - \pi)$
for $x_2 > 0$, and $-\varphi(x_1)$ for $x_2 < 0$, so the $\sigma = 0$ boundary
is the vertical line $x_1 = \pi \pm 0.79$ rad, independent of $|x_2|$. 
The region with a negative barrier authority (red) occupies two lobes. In both lobes, the drift $L_f h = 2x_2(\sin x_1 - (x_1 - \pi))$ is large and
negative: the pendulum is both displaced from upright (so gravity destabilizes) and moving away from it (so velocity compounds the drift),
and together they exceed the available control authority $u_{\max}$.
  
\begin{figure}[t]
  \centering
  \includegraphics[width=0.9\linewidth]{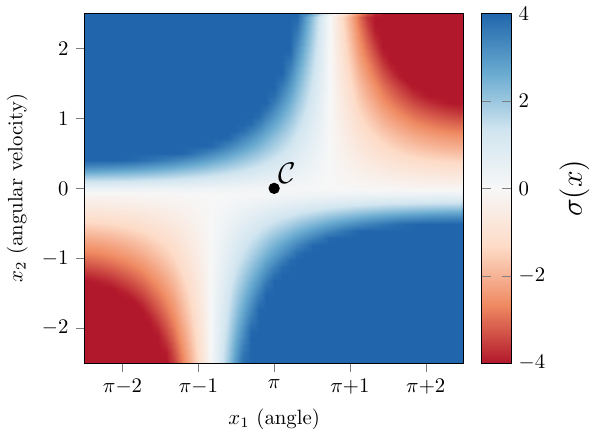}
  \caption{Barrier authority $\sigma(x)$ for the controlled pendulum
    targeting the upright equilibrium $(\pi,0)$ (dot) with $u_{\max}=1.5$.
    Blue: $\sigma > 0$; red: $\sigma < 0$.
    The red lobes correspond to states where the pendulum is
    displaced from upright and moving further away, so gravitational
    drift and velocity compound to reduce $\sigma_{\min}$.}
  \label{fig:barrier_authority}
\end{figure}

 \end{example}

If $\sigma(x) \leq 0$, even maximum control effort cannot increase $h$. At such points, the CBF condition will be infeasible. 
Let $\sigma_{\min}$ be the \emph{worst-case barrier authority},
\begin{equation}
    \sigma_{\min} \coloneqq
    \inf_{\substack{t \in [0,T] \\ x \in \tilde{\mathcal{C}}(t)}}
    \sigma(x).
    \label{eq:sigma_min}
\end{equation}
The worst-case barrier authority captures the most
demanding point on the tube, which is the state and time at which the system
has the least capacity to push $h$ upward, while the constriction
schedule is still active. Feasibility of~\eqref{eqn:qp} at $(x,t) \in \tilde{\mathcal{C}}(t) 
\times [0,T]$ reduces to comparing the constriction demand 
$|\dot{r}(x(0),t)|$ against the local barrier authority $\sigma(x)$. 
When the constriction rate is too aggressive relative to the available 
control authority, no admissible $u$ can satisfy the CBF condition, 
and the program becomes infeasible. When $\sigma_{\min} \leq 0$, a longer deadline or a less aggressive  constriction schedule may be required to restore feasibility. The following result characterizes this 
tradeoff precisely, linking the deadline $T$, the constriction schedule 
$r(x(0),t)$, and the bound $u_{\max}$ in a single verifiable condition.

\begin{theorem}[\textbf{Feasibility and minimum recovery time}]
\label{thm:feasibility}
Consider system~\eqref{eqn:system} with the relaxation 
schedule $r(x(0), t)$ and constricting CBF \eqref{eqn:constricting_barrier}. 
\begin{enumerate}[label=(\roman*)]
    \item \emph{Local feasibility:} The
    program~\eqref{eqn:qp} is feasible at $(x,t) \in
    \tilde{\mathcal{C}}(t) \times [0,T]$ if and only if
    \begin{equation}
        |\dot{r}(x(0),t)| \leq \sigma(x).
        \label{eqn:feasibility}
    \end{equation}
    \item \emph{Global feasibility (linear schedule):} For the linear
    schedule $r(x(0),t) = r_0(1 - t/T)$, program~\eqref{eqn:qp} is feasible for
    all $(x,t) \in \tilde{\mathcal{C}}(t) \times [0,T]$ if and
    only if
    \begin{equation}
        T \geq T_{\min} \coloneqq \frac{r_0}{\sigma_{\min}},
        \label{eqn:tmin}
    \end{equation}
    where $r_0$ is the initial violation~\eqref{eqn:r0} and
    $\sigma_{\min}$ is the worst-case barrier
    authority~\eqref{eq:sigma_min}.
\end{enumerate}
\end{theorem}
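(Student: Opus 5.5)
The plan is to reduce feasibility of \eqref{eqn:qp} to a scalar comparison. Only the CBF constraint couples $u$ to the state, and its left-hand side is affine in $u$. So the program is feasible at $(x,t)$ exactly when the largest achievable value of $L_f h(x)+L_g h(x)u$ over $\|u\|\le u_{\max}$ meets the required level. That largest value is $\sigma(x)$ by definition~\eqref{eq:safety_margin}. For the norm ball the supremum is attained at $u = u_{\max} L_g h(x)^\top/\|L_g h(x)\|$, which gives $\sigma(x)=L_f h(x)+\|L_g h(x)\|u_{\max}$. For general convex compact $\mathcal{U}$ attainment follows from compactness. Hence \eqref{eqn:qp} is feasible if and only if
\[
\sigma(x) \ \geq\ -\dot r(x(0),t) - \gamma\bigl(\tilde h(x,t)\bigr) \ =\ |\dot r(x(0),t)| - \gamma\bigl(\tilde h(x,t)\bigr),
\]
using $\dot r\le 0$ from Definition~\ref{def:constricting_tube}(iii).

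For part (i), sufficiency is immediate. On $\tilde{\mathcal{C}}(t)$ we have $\tilde h\ge 0$, hence $\gamma(\tilde h)\ge 0$, so \eqref{eqn:feasibility} implies the displayed condition. For necessity I would restrict to the tube boundary $\tilde h(x,t)=0$. There $\gamma(0)=0$ and the displayed condition is exactly \eqref{eqn:feasibility}. I expect this to be the main obstacle. In the interior of the tube the slack $\gamma(\tilde h)>0$ makes the displayed condition strictly weaker than \eqref{eqn:feasibility}, so the ``only if'' must be read on $\partial\tilde{\mathcal{C}}(t)$. That is also the only place where it matters for the Nagumo argument in Theorem~\ref{thm:forward_invariance}. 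I would state this qualification explicitly, or interpret (i) as the $\gamma$-independent worst case, meaning it must hold for every admissible choice of $\gamma$. Letting $\gamma$ be arbitrarily flat then recovers necessity at every point.

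For part (ii), the linear schedule gives the constant rate $\dot r = -r_0/T$. By (i), global feasibility on the tube is equivalent to $r_0/T\le\sigma(x)$ for all $(x,t)$ with $x\in\tilde{\mathcal{C}}(t)$, that is, $r_0/T\le\sigma_{\min}$. A key observation is that $\sigma_{\min}$ does not depend on $T$. The union over $t\in[0,T]$ of $\{h\ge -r_0(1-t/T)\}$ equals $\{h\ge -r_0\}$ for every $T>0$, so the infimum in \eqref{eq:sigma_min} is taken over a fixed set. When $\sigma_{\min}>0$, rearranging gives exactly $T\ge r_0/\sigma_{\min}=T_{\min}$. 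When $\sigma_{\min}\le 0$ and $r_0>0$, no finite $T$ works, which is consistent with $T_{\min}=+\infty$ under the same interpretation.

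Necessity in (ii) inherits the boundary caveat from (i). Every $x$ with $-r_0\le h(x)\le 0$ lies on $\partial\tilde{\mathcal{C}}(t)$ for the unique $t$ solving $r_0(1-t/T)=-h(x)$, so the boundary argument covers those points. If the infimum $\sigma_{\min}$ is approached only at points with $h(x)>0$, I would appeal to the worst-case reading of $\gamma$ described in part (i) to close the argument.
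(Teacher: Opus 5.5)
Your proposal is correct and follows essentially the paper's proof: maximize the affine CBF constraint over $\|u\|\le u_{\max}$ to obtain $\sigma(x)$, use $\gamma(0)=0$ on $\partial\tilde{\mathcal{C}}(t)$ to land exactly on \eqref{eqn:feasibility}, and use the constant rate $r_0/T$ of the linear schedule. Your qualifications are sound and go beyond the paper, whose proof passes over them by declaring it ``sufficient to analyze feasibility on $\partial\tilde{\mathcal{C}}(t)$'': for a fixed $\gamma$ the ``only if'' genuinely fails at interior points where $\gamma(\tilde h(x,t)) \ge |\dot r(x(0),t)| - \sigma(x) > 0$; because $\sigma_{\min}$ is an infimum over the whole tube rather than over $\{x : -r_0 \le h(x) \le 0\}$, the converse in (ii) needs the reading you propose; and the case $\sigma_{\min}\le 0$ needs your convention $T_{\min}=+\infty$.
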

\noindent\textbf{Proof.}
On the boundary $\partial\tilde{\mathcal{C}}(t)$, $\tilde{h} = 0$ so
$\gamma(\tilde{h}) = 0$ and the constraint~\eqref{eqn:cbf_condition}
is most restrictive. In the interior, $\tilde{h} > 0$ so
$-\gamma(\tilde{h}) < 0$, and any $u$ satisfying the boundary
condition also satisfies the interior condition. It is therefore
sufficient to analyze feasibility on $\partial\tilde{\mathcal{C}}(t)$.
\emph{(i) Local feasibility.} On $\partial\tilde{\mathcal{C}}(t)$,
$\gamma(\tilde{h}) = 0$ and~\eqref{eqn:cbf_condition} reduces to
\begin{equation}
    L_g h(x)\,u \geq -L_f h(x) + |\dot{r}(x(0),t)|,
    \label{eq:boundary_condition}
\end{equation}
where we used $\dot{r}(x(0),t) \leq 0$. The maximum of $L_g h(x)\,u$
over $\|u\| \leq u_{\max}$ is $\|L_g h(x)\| u_{\max}$, achieved by
aligning $u$ with $L_g h(x)^\top$. A feasible $u$ exists if and only
if $\|L_g h(x)\| u_{\max} \geq -L_f h(x) + |\dot{r}(x(0),t)|$, which
rearranges to~\eqref{eqn:feasibility}. Conversely, if
$|\dot{r}(x(0),t)| > \sigma(x)$, then no $u$ with $\|u\| \leq u_{\max}$
can satisfy~\eqref{eq:boundary_condition}, so~\eqref{eqn:qp} is infeasible.

\emph{(ii) Global feasibility.} For the linear schedule,
$|\dot{r}(x(0),t)| = r_0/T$ is constant. Global feasibility requires
the local condition~\eqref{eqn:feasibility} to hold for every $t \in
[0,T]$ and all $x \in \partial\tilde{\mathcal{C}}(t)$, i.e.,
$r_0/T \leq \sigma_{\min}$. Rearranging gives~\eqref{eqn:tmin}.
Conversely, if $T < T_{\min}$ then $r_0/T > \sigma_{\min}$, so the
local condition fails at the worst-case point on the tube boundary,
and~\eqref{eqn:qp} is infeasible there.\hfill$\blacksquare$

The bound $T_{\min} = r_0/\sigma_{\min}$ depends on the initial
condition through $r_0$ and, implicitly, through $\sigma_{\min}$ since
the tube boundary $\partial\tilde{\mathcal{C}}(t)$ is initialized from
$x(0)$.  Because the tube only constricts, this worst case reduces to a static infimum, $\sigma_{\min} = \inf_{x \in \tilde{\mathcal{C}}(0)} \sigma(x)$, over the initial inflated set.  A larger initial violation $r_0$ or lower control authority
$\sigma_{\min}$ each increase $T_{\min}$, capturing the intuitive
tradeoff. When $u_{\max} = \infty$, $\sigma_{\min} = \infty$ and
$T_{\min} = 0$, consistent with the unconstrained case where any
constriction rate is satisfiable. A similar analysis can be carried out for any convex input constraint set $\mathcal{U}$ by replacing $\|L_gh(x)\|u_{\max}$ in~\eqref{eq:safety_margin} 
with $\max_{u \in \mathcal{U}} L_gh(x)u$.
Crucially, $T_{\min}$ is computable before deploying the controller,
providing a design-time certificate for prescribed-time safety. 

The constriction schedule $r(x(0),t)$ gives the designer a further
handle beyond $T$. For a fixed $T > T_{\min}$, different schedules
distribute the demand $|\dot{r}(x(0),t)|$ differently over $[0,T]$.
The linear schedule imposes a constant pressure $r_0/T$ throughout;
an exponential schedule concentrates the effort early, reducing peak
demand near $t = T$. A polynomial schedule defers effort
toward the deadline. The local feasibility
condition~\eqref{eqn:feasibility} makes this tradeoff precise. The
schedule is feasible at time $t$ if and only if
$|\dot{r}(x(0),t)| \leq \sigma(x)$ pointwise, so the designer can
shape $r(x(0),t)$ to match the time-varying control authority of
the system. 

Unlike methods that introduce slack variables and verify feasibility
post-hoc~\cite{KG-DP:2021}, condition~\eqref{eqn:feasibility} is
verifiable at design time directly from $f$, $g$, $h$, and $u_{\max}$. For general nonlinear systems, computing $\sigma_{\min}$ a priori amounts to globally minimizing $\sigma$
 over $\tilde{\mathcal{C}}(0)$, which is tractable for structured cases, e.g.\ via sum-of-squares bounds for polynomial data, or in closed form when $\sigma$ is monotone over the reachable state range (Section~\ref{sec:acc}), and otherwise verified online. For linear systems with a quadratic
barrier, $T_{\min}$ admits a closed-form expression.

\begin{corollary}[\textbf{Closed-form $T_{\min}$ for linear systems}]
\label{cor:linear_tmin}
Consider the system $\dot{x} = Ax + Bu$, and $h(x) = c - x^\top P x$ with
$c > 0$ and $P = P^\top \succ 0$. The tube boundary
$\partial\tilde{\mathcal{C}}(t) = \{x : x^\top P x = c + r(x(0),t)\}$
is an ellipsoid of radius $\rho(t) = \sqrt{c + r(x(0),t)}$. On this boundary,
the barrier authority~\eqref{eq:safety_margin} evaluates to
\begin{equation}
    \sigma(x) = 2\bigl(\|B^\top P x\|\,u_{\max} - x^\top P A x\bigr),
    \label{eqn:sigma_linear}
\end{equation}
and its infimum over the tube horizon is achieved at $t = T$:
\begin{equation}
    \sigma_{\min} = 2c\left(\mu_{\min}\, u_{\max} - \lambda_{\max}(P^{1/2}AP^{-1/2})\right),
    \label{eqn:sigma_min_linear}
\end{equation}
where $\mu_{\min} := \min_{\|P^{1/2}v\|=1} \|B^\top P^{1/2} v\|$
is the minimum input gain over the ellipsoid. The minimum recovery time is then $T_{\min} = r_0/\sigma_{\min}$.
\end{corollary}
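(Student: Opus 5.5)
The plan is a direct computation of $\sigma$ from~\eqref{eq:safety_margin}, followed by a change of coordinates that turns the ellipsoidal boundary into a sphere. Three points need care:
\begin{itemize}
\item the sign of the bracket in~\eqref{eqn:sigma_min_linear};
\item whether~\eqref{eqn:sigma_min_linear} is an equality or a lower bound;
\item the convention that the infimum in~\eqref{eq:sigma_min} is taken over the boundaries $\partial\tilde{\mathcal{C}}(t)$.
\end{itemize}

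\emph{Step 1: formula~\eqref{eqn:sigma_linear}.} Since $\nabla h(x) = -2Px$, we have $L_f h(x) = -2x^\top P A x$ and $L_g h(x) = -2x^\top P B$. Hence $\|L_g h(x)\| = 2\|B^\top P x\|$. Substituting into $\sigma(x) = \|L_g h(x)\|u_{\max} + L_f h(x)$ gives~\eqref{eqn:sigma_linear} immediately. The boundary description also follows directly: $\tilde h(x,t)=0$ means $x^\top P x = c + r(x(0),t) = \rho(t)^2$.

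\emph{Step 2: normalize the boundary.} Set $y = P^{1/2}x$, so that $\partial\tilde{\mathcal{C}}(t)$ becomes the sphere $\|y\| = \rho(t)$. Write $y = \rho(t)\,w$ with $\|w\|=1$. Then $\|B^\top P x\| = \rho\,\|B^\top P^{1/2} w\|$, and
\begin{equation*}
x^\top P A x = \rho^2\, w^\top P^{1/2} A P^{-1/2} w .
\end{equation*}
This is the step where I read $\mu_{\min}$ in the statement as $\min_{\|w\|=1}\|B^\top P^{1/2}w\|$. The quadratic form only sees the symmetric part of $M := P^{1/2}AP^{-1/2}$, so $w^\top M w \le \lambda_{\max}\bigl(\tfrac12(M+M^\top)\bigr)$. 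I would state the corollary with this symmetric part, which coincides with $\lambda_{\max}(M)$ whenever $M$ is symmetric. Combining,
\begin{equation*}
\sigma(x) \ge 2\bigl(\rho\,\mu_{\min}u_{\max} - \rho^2\lambda_{\max}\bigr).
\end{equation*}

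\emph{Step 3: bound over the horizon.} The input term scales like $\rho$ and the drift term like $\rho^2$. Write $\rho\mu_{\min}u_{\max} - \rho^2\lambda_{\max} = \rho^2\bigl(\mu_{\min}u_{\max}/\rho - \lambda_{\max}\bigr)$.
\begin{itemize}
\item By Definition~\ref{def:constricting_tube}(iii), $\rho(t)$ is nonincreasing, with smallest value $\rho(T) = \sqrt c$.
\item If $\lambda_{\max}\le 0$, the function $\rho\mapsto \rho\mu_{\min}u_{\max}-\rho^2\lambda_{\max}$ is nondecreasing in $\rho$, so its minimum over the horizon is attained at $t=T$.
\item If $\lambda_{\max}>0$, this function is concave in $\rho$. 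Its minimum over $[\sqrt c,\rho(0)]$ is therefore attained at one of the endpoints, and the endpoint $t=T$ is the correct one only under an additional condition on $\rho(0)$.
\end{itemize}
I would state this as an explicit hypothesis. The match with~\eqref{eqn:sigma_min_linear} is also not exact: my calculation gives $2(\sqrt c\,\mu_{\min}u_{\max} - c\,\lambda_{\max})$ at $t=T$, whereas the statement has $c$ multiplying both terms. I would record this discrepancy explicitly rather than force agreement, for instance by working with normalized directions on $\|y\|=\sqrt c$.

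Two further points concern how~\eqref{eqn:sigma_min_linear} is to be read.
\begin{itemize}
\item \emph{Infimum over the boundary, not the tube.} The infimum in~\eqref{eq:sigma_min} must be understood over $\partial\tilde{\mathcal{C}}(t)$. This is the set that the proof of Theorem~\ref{thm:feasibility} shows to be decisive. Over the full tube, the point $x=0$ would give $\sigma=0$ trivially.
\item \emph{Lower bound, not equality.} Replacing the infimum of a difference by the difference of separate infima yields only a lower bound. It is an equality when a unit $w$ minimizing $\|B^\top P^{1/2}w\|$ also maximizes $w^\top M w$. Otherwise the resulting $T_{\min}=r_0/\sigma_{\min}$ is a conservative, still sufficient, certificate via Theorem~\ref{thm:feasibility}(ii).
\end{itemize}

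I expect the main obstacle to be precisely this exactness question, together with the $\rho$-scaling in Step 3. I would first try to prove only the inequality $\sigma_{\min}\ge$ the stated expression, which already suffices for feasibility. I would then note the alignment condition under which equality holds, and the condition on $\rho(0)$ under which $t=T$ is the worst case.
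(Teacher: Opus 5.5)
Your analysis is correct, and it follows the paper's own route: you normalize the boundary by $x=\rho(t)P^{-1/2}w$ with $\|w\|=1$, which is also the reading of $\mu_{\min}$ that the sketch uses. The difference is that you track the scaling correctly and the paper does not.

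The sketch asserts that after substitution $\sigma(x)=2\rho(t)\bigl(\|B^\top P^{1/2}w\|u_{\max}-w^\top Mw\bigr)$, which is linear in $\rho(t)$. In fact the drift term is $2\rho(t)^2\,w^\top Mw$, with $M=P^{1/2}AP^{-1/2}$ as in your Step~2. The rest of the sketch inherits this error:
\begin{itemize}
\item Attainment at $t=T$ is argued from monotonicity of $\rho$ alone. Even under linear scaling this would need the bracket to be positive.
\item The prefactor $2c$ does not match even the sketch's own $\rho(T)=\sqrt c$.
\item ``Minimizing over $\|v\|=1$'' replaces a joint minimum by separate extrema, and replaces $\lambda_{\max}\bigl(\tfrac12(M+M^\top)\bigr)$ by $\lambda_{\max}(M)$.
\end{itemize}
Your boundary convention is also forced, since \eqref{eq:sigma_min} read literally gives $\sigma_{\min}\le\sigma(0)=0$.

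So the corollary as stated is false, not just loosely proved. Take $\dot x=x+u$, $|u|\le 3$, $P=1$ (so both readings of $\mu_{\min}$ agree), $c=4$, $x(0)=3$. The corollary yields $\mu_{\min}=\lambda_{\max}=1$, $\sigma_{\min}=16$, $r_0=5$ and $T_{\min}=5/16$. But $\sigma(x)=2(3|x|-x^2)$ vanishes at $x(0)$, so the linear schedule is infeasible at $t=0$ for every $T$. Indeed $\dot x\ge 0$ whenever $x\ge 3$, so no admissible input recovers. At $t=T$ the boundary value is $\sigma(\pm 2)=4$, which matches your $2(\sqrt c\,\mu_{\min}u_{\max}-c\,\lambda_{\max})$ rather than $16$.

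Two refinements to what you propose to prove.

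\emph{No hypothesis on $\rho(0)$ is needed.} Set $\phi(\rho)=2\bigl(\rho\,\mu_{\min}u_{\max}-\rho^2\lambda_{\max}\bigr)$ and note $\rho(0)=\|P^{1/2}x(0)\|$. Your own case split ($\phi$ nondecreasing if $\lambda_{\max}\le 0$, concave otherwise) already gives the unconditional bound $\min\{\phi(\sqrt c),\phi(\rho(0))\}$. Moreover, $\phi(\rho(0))-\phi(\sqrt c)=2(\rho(0)-\sqrt c)\bigl(\mu_{\min}u_{\max}-\lambda_{\max}(\rho(0)+\sqrt c)\bigr)$. Hence $t=T$ is the worst endpoint exactly when $\mu_{\min}u_{\max}\ge\lambda_{\max}(\rho(0)+\sqrt c)$.

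\emph{Drop the idea that normalizing on $\|y\|=\sqrt c$ could reconcile the prefactor.} On that sphere the input term still scales as $\sqrt c$ and the drift as $c$. So $2c(\cdot)$ is correct only when $c=1$, and you should simply state the corrected value.

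Keep in mind that the resulting $T=r_0/\min\{\phi(\sqrt c),\phi(\rho(0))\}$ is only a sufficient certificate. In particular, failure of $\mu_{\min}u_{\max}>\lambda_{\max}$ does not preclude recovery, contrary to the remark after the corollary.

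The certificate also covers less than you claim. It gives the Nagumo condition on $\partial\tilde{\mathcal{C}}(t)$, not feasibility of \eqref{eqn:qp} on the whole tube. At an interior point $x'$, feasibility holds iff $\sigma(x')\ge|\dot r(x(0),t)|-\gamma(\tilde h(x',t))$. At $x'=0$ this requires $\gamma(c+r(x(0),t))\ge|\dot r(x(0),t)|$. The boundary reduction in the proof of Theorem~\ref{thm:feasibility} glosses over this.
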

We provide an overview of the proof. The barrier authority for the linear system is~\eqref{eqn:sigma_linear}. On the tube boundary $x^\top Px = \rho(t)^2$, substitute $x = \rho(t)P^{-1/2}v$ with $\|v\| = 1$. After substitution, $\sigma(x) = 2\rho(t)\bigl(\|B^\top P^{1/2}v\|
\,u_{\max} - v^\top P^{1/2}AP^{-1/2}v\bigr)$, which scales linearly
in $\rho(t)$. Since $\rho(t)$ is monotone decreasing, the infimum
over the full tube is attained at $t = T$ where $\rho(T) = \sqrt{c}$,
and minimizing over $\|v\|=1$ yields~\eqref{eqn:sigma_min_linear}. The expression~\eqref{eqn:sigma_min_linear} is positive if and only
if $\mu_{\min} u_{\max} > \lambda_{\max}(P^{1/2}AP^{-1/2})$, i.e.,
the control authority exceeds the worst-case drift on the ellipsoid.
If this condition fails, recovery in finite time is not possible.

\begin{remark}[\textbf{Extension to higher relative degree}]
\label{rem:relative_degree}
When $h$ has relative degree $k > 1$ with respect
to~\eqref{eqn:system}, the framework extends by combining the
constricting barrier with the high-order CBF construction
of~\cite{WX-CB:2021}: define $\psi_0(x,t) = h(x) + r(x(0),t)$ and
propagate through the cascade $\psi_i = \dot{\psi}_{i-1} +
\gamma_i(\psi_{i-1})$ until the control input appears at order $k$. The
schedule $r(x(0),\cdot)$ must be $\mathcal{C}^k$, and the constriction
pressure enters through the $k$-th time derivative $r^{(k)}(x(0),t)$ in the
final constraint. An example for a system with relative degree 2 is provided in Section~\ref{sec:exp2}. \oprocend
\end{remark}

\begin{remark}[\textbf{Relation to HJ reachability}]~\label{rem:HJreach}
Hamilton-Jacobi reachability~\cite{SB-MC-SH-CJT:2017} provides globally optimal state-dependent certificates by solving a backward PDE: the value function $V(x, t)$ encodes the optimal worst-case demand at every $(x, t)$. The constricting schedule $\tilde h(x(0), t)$ is, by contrast, a closed-form surrogate 
that is more conservative
because the schedule is fixed based on the 
initial condition $x(0)$ and deadline $T$, and
does not adapt to the realized trajectory. 
HJ reachability and the constricting tube framework occupy complementary regimes: HJ provides optimal certificates 
whereas the constricting tube provides explicit feasibility certificates and QP-based synthesis in regimes where HJ is intractable, such as high-dimensional systems. 
Quantifying the gap between the constricting tubes
and the optimal rate of constriction, 
is a direction for future work.  \oprocend
\end{remark}

\begin{remark}[\textbf{Recursive feasibility}]\label{rem:recursive}
Theorem~\ref{thm:feasibility} establishes 
feasibility of \eqref{eqn:qp} at $(x, t) \in \tilde{\mathcal{C}} \times [0, T]$, but 
not recursive feasibility: the controller 
may steer the state
to a point where \eqref{eqn:qp} is infeasible, 
and no admissible control exists at future times. Addressing recursive feasibility requires global
reasoning over the full horizon, which can only be done in specific cases. This remains a direction for future work.
\oprocend
\end{remark}

\section{Numerical experiments}
\label{sec:simulations}
We demonstrate our framework on a testbed of experiments to validate its scalability and practical use\footnote{Code repository:
\url{https://github.com/darshangm/prescribed_time_control}.}.
We use $\gamma(\tilde{h}) = \alpha\tilde{h}$, with $\alpha = 0.9$ throughout.

\subsection{Prescribed-time recovery for adaptive cruise control}
\label{sec:acc}

We illustrate the feasibility certificate of Theorem~\ref{thm:feasibility} on
the adaptive cruise control (ACC) system of~\cite{ADA-XX-JWG-PD:2016}. We adopt the model and
parameters of~\cite{ADA-XX-JWG-PD:2016}, with a constant lead vehicle speed. We pose a prescribed-time \emph{recovery} task, in which the follower
begins at an unsafe headway and must restore the safe headway by a
deadline. The state is $x = (v_f, D)$, where $v_f$ is the follower speed and $D$ the
headway to a lead vehicle traveling at constant speed $v_0$. The dynamics are,
\begin{equation}
  \dot v_f = -\tfrac{1}{M} F_r(v_f) + \tfrac{1}{M} u,
  \qquad
  \dot D = v_0 - v_f,
  \label{eq:acc-dyn}
\end{equation}
with wheel force $u$ subject to $|u| \le u_{\max}$ and aerodynamic drag
$F_r(v_f) = f_0 + f_1 v_f + f_2 v_f^2$. We use the parameters
of~\cite{ADA-XX-JWG-PD:2016}: $M = 1650$\,kg, $f_0 = 0.1$\,N, $f_1 = 5$\,N\,s/m,
$f_2 = 0.25$\,N\,s$^2$/m$^2$, $g = 9.81$\,m/s$^2$, and the comfort-bounded
force $u_{\max} = a_f M g$ with $a_f = 0.25$, giving $u_{\max} \approx
4047$\,N. The time-headway barrier is
\begin{equation}
  h(x) = D - \tau_d v_f, \qquad \tau_d = 1.8\ \text{s},
  \label{eq:acc-barrier}
\end{equation}
 Since $L_g h = -\tau_d/M$ is constant and nonzero, Assumption~\ref{assn:standing}(ii)
holds globally and $h$ has relative degree one. 
The barrier authority~\eqref{eq:safety_margin} evaluates to
\begin{equation}
  \sigma(x) = \frac{\tau_d}{M}\, u_{\max} + (v_0 - v_f)
            + \frac{\tau_d}{M}\, F_r(v_f),
  \label{eq:acc-sigma}
\end{equation}
which depends on $v_f$ alone and is quadratic in $v_f$ with vertex far outside
any operating speed range. Hence $\sigma$ is monotone over the reachable speed
band, and the worst-case authority $\sigma_{\min}$~\eqref{eq:sigma_min} is
attained in closed form at the band endpoint. The follower starts at
$x(0) = (24, 30)$ behind a lead vehicle at $v_0 = 20$\,m/s, an unsafe
configuration with $h(x(0)) = -13.2 < 0$, so $r_0 = 13.2$. Over the reachable
band $v_f \in [v_0, v_f(0)]$ we obtain $\sigma_{\min} = 0.70$, hence $T_{\min}$ is
\begin{equation}
  T_{\min} = \frac{r_0}{\sigma_{\min}} = 18.79\ \text{s},
  \label{eq:acc-Tmin}
\end{equation}
computed entirely at design time, without simulating the closed loop. We apply the constricting QP~\eqref{eqn:qp} with the linear schedule
$r(t) = r_0(1 - t/T)$, $\alpha = 0.9$, and deadline
$T = 1.05\,T_{\min} = 19.73$\,s. Figure~\ref{fig:acc-sweep}\emph{(left)} shows the barrier
$h(x(t))$ rising from $-13.2$ and tracking the tube floor $-r(t)$, reaching
$\mathcal{C}$ at the deadline with $h(x(T)) \ge 0$. The follower decelerates to open the headway and the input remains within $u_{\max}$ throughout. To verify that the closed-form bound~\eqref{eq:acc-Tmin} is tight, we sweep the deadline $T$ across $T_{\min}$ and record the
feasibility margin $\min_t\,(\sigma(x(t)) - |\dot r(t)|)$.
Figure~\ref{fig:acc-sweep}\emph{(Right)} shows that this margin crosses zero exactly at
$T = T_{\min}$: for $T < T_{\min}$ the condition~\eqref{eqn:feasibility}
is violated and the QP becomes infeasible. The certificate $T \ge T_{\min}$ therefore predicts the exact feasibility boundary from the system parameters. 

\begin{figure}[t]
  \centering
  \begin{tikzpicture}

  \node at (-2.5,0) {
  \includegraphics[width=0.38\linewidth]{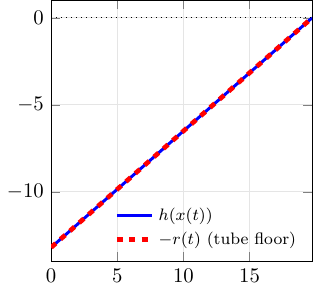}};
  \node at (-2.2,-1.8) {Time $t$};
  
  \node at (2.2,0) {\includegraphics[width=0.40\linewidth]{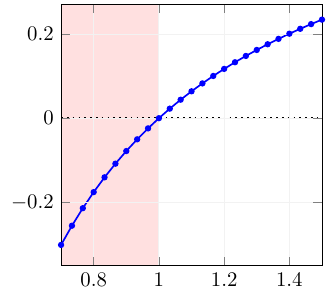}};
  \node at (2.3,-1.8) {$T / T_{\min}$};
  \node at (0.3,0) [rotate=90] {$\displaystyle\min_{t}\,\bigl(\sigma(x(t)) - |\dot r(t)|\bigr)$};
  
  \end{tikzpicture}
  \vspace*{-0ex}
  \caption{Prescribed-time recovery for adaptive cruise control.\emph{(Left)} The follower
    starts at an unsafe headway ($h(x(0)) = -13.2$) behind a slower lead and
    recovers the safe time-headway set $\mathcal{C}$ by the deadline
    $T = 19.73$\,s. The barrier $h(x(t))$ tracks the tube floor $-r(t)$ and
    enters $\mathcal{C}$ at $T$. \emph{(Right)} Feasibility margin $\min_t\,(\sigma(x(t)) - |\dot r(t)|)$ as the
    deadline $T$ is swept across $T_{\min}$. The margin crosses zero exactly at
    $T = T_{\min} = 18.79$\,s: the local feasibility
    condition~\eqref{eqn:feasibility} fails for $T < T_{\min}$ (shaded,
    infeasible).}
    \vspace*{-0ex}
  \label{fig:acc-sweep}
\end{figure}

\subsection{Coupled multi-agent prescribed-time reach}
\label{sec:hex}
We demonstrate the scalability of our framework on a coupled multi-agent system with
pairwise safety constraints. Consider $N = 6$ unicycle agents with the dynamics
\begin{equation}
\dot p_{x,i} = v_i \cos(\theta_i) \quad \dot p_{y,i} = v_i \sin(\theta_i) \quad \dot\theta_i = \omega_i
\end{equation}
 and input constraints $|v_i| \leq 2.5$~m/s and $|\omega_i| \leq 4.0$~rad/s. The state and input are $X\in \mathbb{R}^{18}, U\in \mathbb{R}^{12}$. Each agent is assigned a vertex of a
regular hexagon. Initial positions are sampled randomly inside the target hexagon. The barrier is
\begin{equation}
h(X) = \sum_{i=1}^{N} \bigl[ \epsilon^2 - \|p_i - p^\star_i\|^2
- \lambda(1 - \cos(\theta_i - \theta^\star_i)) \bigr],
\label{eq:joint-reach}
\end{equation}
with $\epsilon = 0.4$~m and $\lambda = 0.2$. The orientation target keeps
the relative degree at one. Collision-avoidance barriers are
$h_{ij}(X) = \|p_i - p_j\|^2 - d_{\min}^2$ with $d_{\min} = 0.55$~m.
 
We use an exponential constriction $r(x(0), t) = r_0 (e^{\lambda_s(1 - t/T)} - 1)/(e^{\lambda_s} - 1)$
with $\lambda_s = 3$ and $T = 17$~s, and $T=20$~s. We solve the following QP at every time instance,
\begin{equation}
\begin{aligned}
\min_{U \in \mathbb{R}^{12}} \quad & \|U - U_{\mathrm{nom}}(X)\|^2 \\
\text{s.t.} \quad & L_f h + L_g h\, U + \dot r \geq -\alpha(h + r), \\
& L_g h_{ij}\, U \geq -\alpha_c\, h_{ij}, \quad \forall (i,j), \\
& |v_i| \leq v_{\max}, \ |\omega_i| \leq \omega_{\max}, \ \forall i,
\end{aligned}
\label{eq:central-qp}
\end{equation}
with $\alpha = 0.9$, $\alpha_c = 1.9$, and $\beta = 1.0$. Here
$U_{\mathrm{nom}}$ is a slow PD controller,
so the constricting CBF acts as a safety filter for
prescribed-time recovery. 
 
Figure~\ref{fig:hex-overlay} illustrates the  trajectories. The constricting CBF controller drives all six agents
into their target balls at the deadline, while achieving collision avoidance. We run the same QP with the reach constraint replaced by the standard CBF condition $L_f h + L_g h\, u \geq -\alpha h$. Collision constraints, input bounds, and PD reference are identical. When $T=20s$, the PD reference is utilized mostly, yielding straighter trajectories. When the deadline is short, $T=17s$, the constricting CBF uses more control intervention to drive the agents into the target set by the deadline. The standard CBF is unable to reach within the deadline of $T=20s$.

\begin{figure}[t]
  \centering
  \begin{tikzpicture}
  \node at (-2,-0.4) {\includegraphics[width=0.39\linewidth]{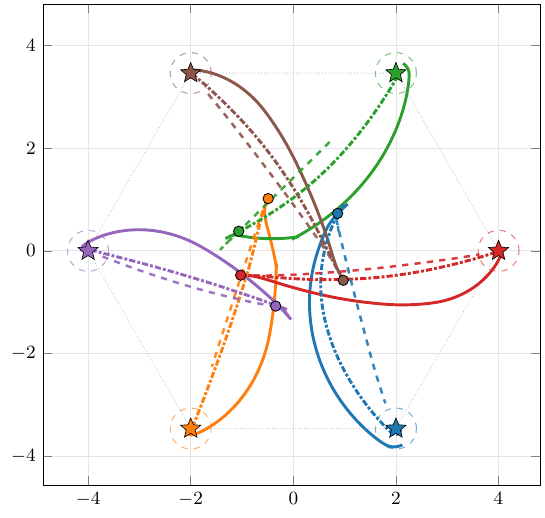}};
  \node at (-2,-2.2) {$p_{x,i}$};
  \node at (-4.0,-0.3) [rotate=90] {$p_{y,i}$};
  \draw [thick, blue, line width=1.0pt, decorate,
           decoration={snake, segment length=10mm, amplitude=1.5mm}]
           {(1.0,0.6) -- (2.8,0.6)};

  \node at (1.9,1.0) {$T=17$};

\draw [thick, purple, densely dashdotted, line width=1.0pt, decorate,
           decoration={snake, segment length=10mm, amplitude=1.5mm}]
           {(1.0,-0.5) -- (2.8,-0.5)};       

  \node at (1.85,-0.1) {$T=20$};

  \draw [thick, red, dashed, line width=1.0pt, decorate,
           decoration={snake, segment length=10mm, amplitude=1.5mm}]
           {(1.0,-1.6) -- (2.8,-1.6)};       

  \node at (1.85,-1.2) {Standard CBF};  
  
  \end{tikzpicture}
\vspace{-0.0ex}
  \caption{Prescribed-time reach with $N = 6$ unicycles,
  $T = 17,20$s. \emph{Solid:} proposed constricting CBF controller,  \emph{dashed:}
  standard CBF baseline with slack on the reach constraint. All share identical pairwise collision CBFs, input bounds, and PD
  reference. Agents randomly and are assigned to the target stars, with target balls of radius $\epsilon = 0.4$~m. The proposed method enters all reach balls at $t = T$. The baseline fails to enter the
  target set.}
  \vspace{-0.0ex}
  \label{fig:hex-overlay}
\end{figure}

\subsection{Prescribed-time safety via HOCBF constricting tube}
\label{sec:exp2}

We apply the constricting tube framework to a system of relative
degree~2, demonstrating the HOCBF extension of
Remark~\ref{rem:relative_degree}. The system is a planar double
integrator $\dot{p} = v$, $\dot{v} = u$, with state $x = (p_x, p_y,
v_x, v_y)^\top$, control $u = (u_x, u_y)^\top$, $\|u\|_\infty \leq
u_{\max} = 2\,\mathrm{m/s^2}$, and initial condition $x(0) = (3, 2,
-0.3, -0.1)^\top$. The task is to drive the position $p = (p_x, p_y)$ into
$\mathcal{C} = \{x : \|p\| \leq 0.5\}$, by $T = 25\,\mathrm{s}$.

The barrier $h(x) = \epsilon^2 - p_x^2 - p_y^2$ satisfies $L_g h = 0$
and has relative degree~2. 
We use a $\mathcal{C}^2$ quadratic schedule
$r(x(0),t) = (r_0+\delta)(1-t/T)^2 - \delta$,
where $r_0 = -h(x(0)) = 12.75$ and $\delta = \epsilon^2/2 = 0.125$.
The schedule satisfies $r(x(0),0) = r_0$ and $r(x(0),T) = -\delta$,
so the tube terminates $\delta$ inside $\mathcal{C}$, guaranteeing
$h(x(T)) \geq \delta > 0$ rather than merely $h(x(T)) \geq 0$.
The HOCBF sequence is
\begin{align}
  \psi_0(x,t) &= h(x) + r(x(0),t), \\
  \psi_1(x,t) &= L_f h + \dot{r}(x(0),t) + \gamma_1\,\psi_0,
\end{align}
with $\gamma_1 = \gamma_2 = 0.9$. The condition
$\dot{\psi}_1 + \gamma_2\psi_1 \geq 0$ yields the QP constraint
\begin{equation}
\begin{aligned}
  L_f^2 h + L_g L_f h\,u + \ddot{r}(x(0),t)
  &+ \gamma_1(L_f h + \dot{r}(x(0),t)) \\
  & \ \qquad \qquad + \gamma_2\,\psi_1  \geq 0.
  \end{aligned}
  \label{eq:HOCBF_exp2}
\end{equation}

We demonstrate the use of our framework as a safety filter. A nominal PD controller
\begin{equation}
  u_{\mathrm{nom}}(x) = -k_p\,p - k_d\,v,
  \qquad k_p = 0.01,\;\; k_d = 0.05,
\end{equation}
provides a stabilizing tendency toward the origin, and the HOCBF QP
minimally deviates from it as follows:
\begin{equation}
  \min_{u}\;\|u - u_{\mathrm{nom}}\|^2
  \quad\text{s.t.}\quad
  \eqref{eq:HOCBF_exp2},\;\;
  \|u\|_\infty \leq u_{\max}.
\end{equation}
The PD gains are deliberately slow: the nominal controller alone does
not drive $x$ into $\mathcal{C}$ by $T = 25\,\mathrm{s}$. The HOCBF
constraint intervenes to enforce the deadline, with the constricting
schedule providing the inward pressure.

As seen in Figure~\ref{fig:hocbf}, the HOCBF controller achieves $\|p(T)\| = 0.36\,\mathrm{m} <
\epsilon$ with $h(x(T)) = 0.121 > \delta$, confirming prescribed-time
entry into the interior of $\mathcal{C}$. The nominal
controller does not enter $\mathcal{C}$ with time $T$ as seen in Figure~\ref{fig:hocbf}(left), confirming that the HOCBF constraint is the mechanism
responsible for the timing guarantee.

\begin{figure}
\centering
\begin{tikzpicture}
    \node at (-1.6,0)
        {\includegraphics[width=0.4\linewidth]{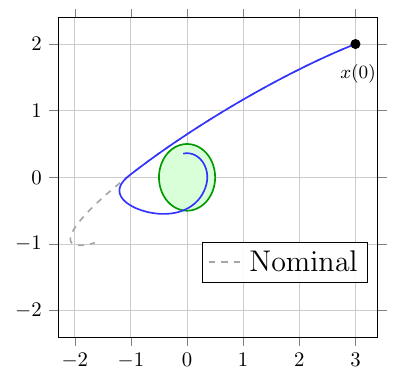}};
    \node at (-1.7,-2.0) {$p_x\,(\mathrm{m})$};
    \node at (-3.4,0) [rotate=90] {$p_y\,(\mathrm{m})$};

    \node at (2.5,0)
        {\includegraphics[width=0.4\linewidth]{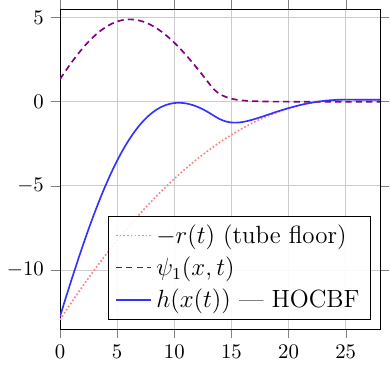}};
    \node at (2.7,-2.0) {Time (s)};
    \node at (0.5,0) [rotate=90] {$\psi_0,\,h(x)$};
\end{tikzpicture}
\caption{Higher order constricting tubes for the double integrator with relative degree~2,
$T = 25\,\mathrm{s}$. \emph{(Left)} Position trajectory: the HOCBF controller
(blue) enters $\mathcal{C}$ by $T$; the nominal PD controller (red)
does not. \emph{(b)} Barrier values $\psi_0(x,t)$ (tube) and $h(x(t))$
(target set): $\psi_0 \geq 0$ is maintained throughout and
$h(x(T)) \geq \delta > 0$ at the deadline.}
\label{fig:hocbf}
\end{figure}


\subsection{Prescribed-time reach-avoid planning for a unicycle}
\label{sec:unicycle}

We demonstrate the constricting tube framework in a trajectory
planning setting where a nonlinear unicycle must reach a target
region by a deadline while avoiding an obstacle that lies directly
on the straight-line path to the target. We embed the constricting
tube and obstacle avoidance as hard path constraints in a model
predictive control problem, solved online with IPOPT via CasADi.
Let $x = (p_x,\, p_y,\, \theta)^\top \in \mathbb{R}^2 \times
\mathbb{S}^1$ denote the position and heading of the unicycle, with
control $u = (v,\, \omega)^\top$,
\begin{equation}
  \dot{p}_x = v\cos\theta,
  \qquad
  \dot{p}_y = v\sin\theta,
  \qquad
  \dot{\theta} = \omega.
  \label{eqn:unicycle}
\end{equation}
Controls are bounded by $|v| \leq v_{\max} = 1.5\,\mathrm{m/s}$ and
$|\omega| \leq \omega_{\max} = 2.0\,\mathrm{rad/s}$. The unicycle
starts at $x(0) = (4.0,\, 3.0,\, \theta_0)^\top$ with $\theta_0 =
\mathrm{atan2}(-3,\,-4)$, pointing directly toward the origin at
distance $\|p(0)\| = 5\,\mathrm{m}$. The \emph{constricting reach
barrier} targets the ball $\mathcal{C}_1 = \{x : h(x) \geq 0\}$
with
\begin{equation}
  h(x) = \epsilon^2 - p_x^2 - p_y^2, \qquad \epsilon = 0.5\,\mathrm{m}.
  \label{eq:h1_unicycle}
\end{equation}
The \emph{static avoidance barrier} keeps the unicycle outside a
circular obstacle of radius $\rho = 0.6\,\mathrm{m}$ centered at
$c = (2.0,\, 1.5)^\top$, which lies on the path from
$x(0)$ to the origin:
\begin{equation}
  h_{\mathrm{obs}}(x) = \|p - c\|^2 - \rho^2.
  \label{eq:h2_unicycle}
\end{equation}
We have $h_1(x(0)) = -24.75 < 0$ and $h_{\mathrm{obs}}(x(0)) = 5.89 > 0$, so the
unicycle starts outside the target but safely clear of the obstacle. We use the  quadratic schedule
\begin{equation}
  r(x(0),t) = r_{0}
            \!\left(1 - \frac{t}{T}\right)^{\!2},
  \label{eq:r1_unicycle}
\end{equation}
with $r_{0} = -h(x(0)) = 24.75$, and deadline
$T = 20\,\mathrm{s}$. 
At each planning instance $t$, we solve the following optimal control problem over the horizon
$[t,\, t + S]$ with $S = 1.5\,\mathrm{s}$:
%
\begin{equation*}
  \begin{aligned}
    \min_{u(\cdot)}\;
      & \int_{t}^{t + S} \!\|u(\tau)\|^2 \,\mathrm{d}\tau
        + \beta \|p(t + S)\|^2 \\
    \mathrm{s.t.}\quad
      & \dot{x}(\tau) = f(x(\tau)) + g\big(x(\tau),u(x(\tau),\tau)\big), \\
      & h\bigl(x(\tau)\bigr) + r(x(0),\tau) \geq 0,  \quad h_{\mathrm{obs}}\bigl(x(\tau)\bigr) \geq 0, \\
      & |v(\tau)| \leq v_{\max},\quad |\omega(\tau)| \leq \omega_{\max}, \quad \forall \tau \in [t,\, t + S].
  \end{aligned}
  \label{eq:NMPC_unicycle}
\end{equation*}
Here $f(x) + g(x, u)$ denotes the unicycle dynamics~\eqref{eqn:unicycle} and $\beta = 10$ penalizes the terminal distance to the origin. The constricting tube constraint evaluates
$r(\tau)$ at time $\tau \in [t, t + S]$,
so the planner is aware of the tightening schedule throughout the
horizon and proactively plans the obstacle detour with the remaining
time budget in mind. The continuous-time problem
is solved using Euler discretization $\mathrm{d}t = 0.005$ s, and
the optimal control over the first interval is applied as the horizon
advances. The planner safely navigates around the obstacle, maintaining
$h_{\mathrm{obs}}(x(\tau)) \geq 0$ at all times, and achieves $h(x(T)) =
+0.233 > \delta = 0.2$, placing the unicycle at $\|p(T)\| =
0.13\,\mathrm{m}$ strictly inside $\mathcal{C}_1$ at the deadline.
Figure~\ref{fig:unicycle} illustrates the trajectory and barrier
histories.

\begin{figure}
\begin{tikzpicture}
    \node at (-1.6,0) {\includegraphics[width = 0.4\linewidth]{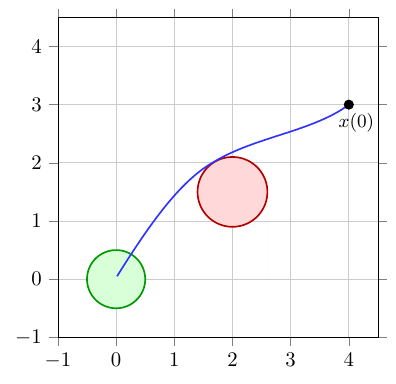}};
    \node at (-1.5,-1.8) {$p_x$};
    \node at (-3.4,0) {$p_y$};
    
    \node at (2.5,0) {\includegraphics[width=0.4\linewidth]{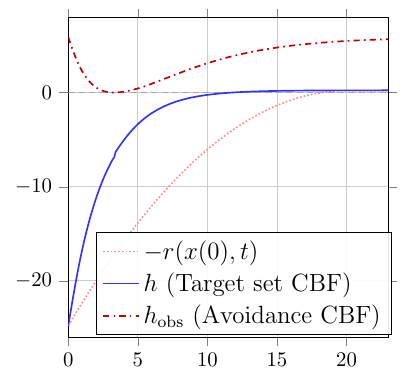}};
    \node at (2.7,-1.8) {Time};
\end{tikzpicture}
\vspace{-1.0ex}
\caption{Prescribed-time reach-avoid planning with the unicycle.
\emph{(Left)} The planner navigates around the obstacle
(red) and enters the target set (green) by the deadline. \emph{(Right)}
Barrier value vs. time: Prescribed-time reach CBF $h(x(t))$ (blue,
solid) rises from $-24.75$ to $+0.233 > \delta$ at $t = T$, staying
above the constricting tube floor $-r(x(0),t)$ (blue, dotted) throughout. Collision avoidance CBF $h_{\mathrm{obs}}(x(t))$ (red, dash-dotted) stays
positive.}
\label{fig:unicycle}
\vspace{-3.0ex}
\end{figure}

\section{Conclusion}

We introduced a constricting CBF framework that reduces prescribed-time
recovery to a forward invariance problem via a designer-specified
constricting tube. The framework yields a single affine constraint
independent of the state dimension, with feasibility conditions
verifiable at design time. Control effort is bounded throughout
the horizon by construction, in contrast to prescribed-time methods
where gains must diverge as the deadline approaches. Validation on a
16-dimensional multi-agent system demonstrates scalability of the
framework, with peak control effort
well below the saturation limit.
A unicycle reach-avoid problem further demonstrates the
modularity of the framework to incorporate multiple simultaneous
constraints. Future directions include robustness to disturbances,
optimal schedule design, and recursive feasibility guarantees under
input constraints.

\bibliography{alias,refs}

@STRING{tac  = "IEEE Transactions on Automatic Control"}

@STRING{lcss  = "IEEE Control Systems Letters"}

@STRING{automatica = "Automatica"}

@STRING{acc = "{A}merican {C}ontrol {C}onference"}

@STRING{ecc = "{E}uropean {C}ontrol {C}onference"}

@STRING{cdc = "{IEEE} Conf.\ on Decision and Control"}

@STRING{icra = "{IEEE} Int.\ Conf.\ on Robotics and Automation"}

@STRING{springer = "Springer"}

@article{KG-etal:2021_multirate,
  title={Multi-rate control design under input constraints via fixed-time barrier functions},
  author={Garg, K. and Cosner, R. K. and Rosolia, U. and Ames, A. D. and Panagou, D.},
  journal=lcss,
  volume={6},
  pages={608--613},
  year={2021}
}

@article{TYH-etal:2024,
  author={Huang, T.-Y. and Zhang, S. and Dai, X. and Capone, A. and Todorovski, V. and Sosnowski, S. and Hirche, S.},
  journal=lcss, 
  title={Learning-Based Prescribed-Time Safety for Control of Unknown Systems With Control Barrier Functions}, 
  year={2024},
  volume={8},
  number={},
  pages={1817-1822}
  }

@article{JSK-SK:2025,
  title={Fixed time convergence guarantees for Higher Order Control Barrier Functions},
  author={S. K., Janani and Kolathaya, S.},
  journal={arXiv preprint arXiv:2507.13888},
  year={2025}
}

@article{ADA-XX-JWG-PD:2016,
  title={Control barrier function based quadratic programs for safety critical systems},
  author={Ames, A. D. and Xu, X. and Grizzle, J. W. and Tabuada, P.},
  journal=tac,
  volume={62},
  number={8},
  pages={3861--3876},
  year={2016}
}

@article{IMM-AMB-CJT:2005,
  title={A time-dependent Hamilton-Jacobi formulation of reachable sets for continuous dynamic games},
  author={Mitchell, I. M. and Bayen, A. M. and Tomlin, C. J.},
  journal=tac,
  volume={50},
  number={7},
  pages={947--957},
  year={2005}
}

@article{AP:2011,
  title={Nonlinear feedback design for fixed-time stabilization of linear control systems},
  author={Polyakov, A.},
  journal=tac,
  volume={57},
  number={8},
  pages={2106--2110},
  year={2011}
}

@article{WX-CB:2021,
  title={High-order control barrier functions},
  author={Xiao, W. and Belta, C.},
  journal=tac,
  volume={67},
  number={7},
  pages={3655--3662},
  year={2021}
}

@ARTICLE{WL-MK:2023,
  author={Li, W. and Krstic, M.},
  journal=tac, 
  title={Prescribed-Time Output-Feedback Control of Stochastic Nonlinear Systems}, 
  year={2023},
  volume={68},
  number={3},
  pages={1431-1446}
}

@article{DG-AA-FP:2026,
  title={Provably Safe Generative Sampling with Constricting Barrier Functions},
  author={Gadginmath, D. and Allibhoy, A. and Pasqualetti, F.},
  journal={arXiv preprint arXiv:2602.21429},
  year={2026}
}

@article{YS-YW-JH-MK:2017,
  title={Time-varying feedback for regulation of normal-form nonlinear systems in prescribed finite time},
    author={Song, Y. and Wang, Y. and Holloway, J. and Krstic, M.},
  journal=automatica,
  volume={83},
  pages={243--251},
  year={2017},
  publisher={Elsevier}
}

@inproceedings{SB-CJT:2021,
  title={Deepreach: A deep learning approach to high-dimensional reachability},
  author={Bansal, S. and Tomlin, C. J.},
  booktitle=icra,
  address={Xi'an, China},
  pages={1817--1824},
  year={2021}
}

@inproceedings{KG-EA-DP:2020_convergenceCLF,
  title={Prescribed-time convergence with input constraints: A control Lyapunov function based approach},
  author={Garg, K. and Arabi, E. and Panagou, D.},
  booktitle=acc,
  address={Denver, Colorado},
  pages={962--967},
  year={2020}
}

@inproceedings{KG-DP:2021,
  title={Robust control barrier and control Lyapunov functions with fixed-time convergence guarantees},
  author={Garg, K. and Panagou, D.},
  booktitle=acc,
  address={New Orleans, Louisiana},
  pages={2292--2297},
  year={2021}
}

@inproceedings{SB-MC-SH-CJT:2017,
  title={Hamilton-jacobi reachability: A brief overview and recent advances},
  author={Bansal, S. and Chen, M. and Herbert, S. and Tomlin, C. J.},
  booktitle=cdc,
  address={Melbourne, Australia},
  pages={2242--2253},
  year={2017}
}

@inproceedings{JJC-etal:2021,
  title={Robust control barrier--value functions for safety-critical control},
  author={Choi, J. J. and Lee, D. and Sreenath, K. and Tomlin, C. J. and Herbert, S. L.},
  booktitle=cdc,
  address={Austin, Texas},
  pages={6814--6821},
  year={2021}
}

@inproceedings{JFF-MC-CJT-SSS:2015,
  title={Reach-avoid problems with time-varying dynamics, targets and constraints},
  author={Fisac, J. F. and Chen, M. and Tomlin, C. J. and Sastry, S. S.},
  booktitle={International Conference on Hybrid Systems: Computation and Control},
  address={Seattle, Washington},
  pages={11--20},
  year={2015}
}

@inproceedings{ADA-SC-etal:2019,
  title={Control barrier functions: Theory and applications},
  author={Ames, A. D. and Coogan, S. and Egerstedt, M. and Notomista, G. and Sreenath, K. and Tabuada, P.},
  booktitle=ecc,
  address={Naples, Italy},
  pages={3420--3431},
  year={2019},
}

@book{FB-SM:08,
  title={Set-theoretic methods in control},
  author={Blanchini, F. and Miani, S.},
  volume={78},
  year={2008},
  publisher={Springer}
}
\bibliographystyle{unsrt}

\end{document}